
\documentclass[twoside,a4paper,leqno,12pt]{amsproc}
\usepackage[top=30mm,right=30mm,bottom=30mm,left=30mm]{geometry}

\usepackage[pagebackref]{hyperref}
\hypersetup{citecolor=blue, linkcolor=blue, colorlinks=true}
\usepackage{amsmath, amssymb, amsthm, amsfonts, color, comment}
\usepackage{graphicx, float, array, multicol, rotating}
\usepackage{mathtools,mathdots,gensymb}
\usepackage{pgfplotstable} 

\renewcommand{\leq}{\leqslant}
\renewcommand{\geq}{\geqslant}
\renewcommand{\le}{\leqslant}

\newcommand{\Nat}{\mathbb{N}}
\newcommand\Sym{\mathrm{Sym}}

\newenvironment{Myenumerate}{%
\begin{enumerate}[noitemsep, nolistsep, leftmargin=*]
}
{%
\end{enumerate}
}
\usepackage[shortlabels]{enumitem}
\setlist[enumerate]{label=\rm{(\alph*)}}

\theoremstyle{definition}
\newtheorem{definition}{Definition}
\newtheorem{remark}[definition]{Remark}
\theoremstyle{plain}

\newtheorem{question}[definition]{Question}
\newtheorem{theorem}[definition]{Theorem}

\newtheorem{lemma}[definition]{Lemma}


\pgfplotsset{scaled y ticks=false}

\begin{document}

\author{John Bamberg}
\address{J. Bamberg, S.\,P. Glasby, C. E. Praeger: Centre for the Mathematics of Symmetry and Computation, University of Western Australia, 35 Stirling Highway, Perth 6009, Australia.
\emph{E-mail:} {\tt\texttt{\{john.bamberg, stephen.glasby, cheryl.praeger\}@uwa.edu.au}}
}

\author{S.\,P. Glasby}
 
\author{Scott Harper}
\address{S. Harper: School of Mathematics, University of Bristol, Bristol, BS8 1TW, UK.
\emph{E-mail:} {\tt\texttt{scott.harper@bristol.ac.uk}}
}

\author{Cheryl E. Praeger}
\thanks{Acknowledgements: The work on this paper began at the 2018 Research Retreat of the Centre for the Mathematics of Symmetry and Computation. The third author is grateful for the Cecil King Travel Scholarship from the London Mathematical Society and the hospitality of the University of Western Australia; he also thanks the Engineering and Physical Sciences Research Council and the Heilbronn Institute for Mathematical Research for their financial support. The problem forms part of an Australian Research Council Discovery Project.\newline
2010 Math Subject Classification: 20B30, 05A15, 68W20.
}
 
\title[Permutations with orders coprime to a given integer]{Permutations with orders\\ coprime to a given integer}

\subjclass[2010]{}
\date{\today}

\begin{abstract}
Let $m$ be a positive integer and let $\rho(m,n)$ be the proportion of permutations of the symmetric group $\Sym(n)$ whose order is coprime to $m$. In 2002, Pouyanne proved that
$\rho(n,m)n^{1-\frac{\phi(m)}{m}}\sim \kappa_m$ where $\kappa_m$ is a complicated
(unbounded) function of $m$.
We show that there exists a positive constant $C(m)$ such that,
for all $n \geq m$,
\[
C(m) \left(\frac{n}{m}\right)^{\frac{\phi(m)}{m}-1} \leq \rho(n,m) \leq \left(\frac{n}{m}\right)^{\frac{\phi(m)}{m}-1}
\]
where $\phi$ is Euler's totient function. 
\end{abstract}

\maketitle

\section{Introduction}\label{s:intro}
In a series of papers between 1965 and 1972, Erd\H{o}s and Tur\'an initiated a systematic study of probabilistic aspects of group theory (see, for example, \cite{ref:ErdosTuran65}). One topic which has been of particular interest since this time is the distribution of element orders in finite symmetric groups, and their most relevant work for us on this topic began in  \cite{ref:ErdosTuran67, ref:ErdosTuran67b} where they studied the proportion $p_{\neg m}(n)$ of elements in $\Sym(n)$ with no cycle of length divisible by a fixed prime $m$. Erd\H{o}s and Tur\'an  obtained an explicit formula for $p_{\neg m}(n)$ and determined the limiting proportion, as $n$ grows, as
\begin{equation}\label{eq:prime}
 p_{\neg m}(n) = k(m) \left(\frac{n}{m}\right)^{-\frac{1}{m}} + O(n^{-1-\frac{1}{m}}),
\end{equation}
where $k(m) = \Gamma\left(1-\frac{1}{m}\right)^{-1}$, noting that $\pi^{-1/2} \leq k(m) <1$ \cite[Sections~3 and~4]{ref:ErdosTuran67}. Although $m$ was assumed to be a prime in \cite{ref:ErdosTuran67}, the formula for $p_{\neg m}(n)$ in \eqref{eq:prime} holds for an arbitrary positive integer $m$, see \cite{ref:glasby}, and their asymptotic arguments can be extended to give explicit  convergence bounds \cite[Theorem 2.3(b)]{ref:BLGNPS02}, again for arbitrary $m$. These explicit bounds, together with analogous results for alternating groups \cite[Section 3]{ref:BLGNPS02}, were used to analyse algorithms for constructing transpositions and 3-cycles \cite[Section 6]{ref:BLGNPS02}, procedures used as components of the constructive recognition algorithms for black-box alternating and symmetric groups in \cite{ref:BLGNPS03}. Many other authors have also considered the proportion $p_{\neg m}(n)$, see for example \cite{ref:BG89, ref:BMW00, ref:NPPY11} and the discussion in \cite{ref:NPS13}.  

Let us introduce the specific topic of interest for this paper. For positive integers $n$ and $m$, let $R(n,m)$ be the set of elements of $\Sym(n)$ whose order is coprime to $m$, and write
\[
 \rho(n,m) \coloneqq \frac{|R(n,m)|}{n!}.
\]
The proportion $\rho(n,m)$ is equal to the proportion  $p_{\neg m}(n)$ of Erd\H{o}s and Tur\'an discussed above if and only if $m$ is a prime power.  Moreover, in \cite[Lemma~II]{ref:ErdosTuran67}, Erd\H{o}s and Tur\'an demonstrate that if $n$ is sufficiently large and $m$ is the product of two distinct primes $p$ and $q$ satisfying $(\log{n})^{3/4} \leq p,q \leq 10\log{n}/\log\log{n}$, then
\begin{equation}\label{eq:two_primes}
\rho(n,m) = n^{-\frac{1}{p}-\frac{1}{q}}(1+O(\log^{-\frac{1}{2}}{n})).
\end{equation}

Pouyanne~\cite[Proposition, p.\,7]{ref:Pouyanne02} used a singularity analysis
on the generating function $C(x)=\sum_{i\geq  0}\rho(n,m)X^m$ for $\rho(n,m)$
to give an asymptotic value of $\rho(n,m)$ for arbitrary $m$. He gives a
nice proof that $\rho(n,m)n^{1-\phi(m)/m}\sim\kappa_m$ where $\kappa_m$ is a
function of~$m$ involving Gamma and M\"obius functions,
see~\eqref{E:kappam}. Unfortunately the elusive
nature~\cite[Figure~1]{ref:Pouyanne02} of
$\kappa_m$ makes it hard to apply this result.
In particular, upper and lower bounds $\rho(n,m)$ cannot be extracted from
the asymptotics in~\cite{ref:Pouyanne02}, and our major contribution is to
bound the quantity $\lambda_m:=\kappa_m/m^{1-\phi(m)/m}$, where $\phi$ is
Euler's totient function.
We need these bounds for applications to randomised
(1-sided Monte Carlo) permutation group algorithms where explicit bounds
on the probability/proportions are required to assign explicit upper
bounds on the probability that the algorithm returns an incorrect answer,
i.e. to prove that it is a Monte Carlo algorithm. Examples of the
use of such probability bounds for exhibiting a Monte Carlo algorithm,
and analysing its complexity, are given for example in~\cite{ref:BLGNPS03}.
Specifically, our algorithm for testing whether a subgroup $\langle X\rangle$ of $\Sym(n)$ contains the alternating group $\textup{Alt}(n)$ either returns
the answer ``Yes'' with no chance of error, or returns an answer ``No'' with
a (preset arbitrarily) small probability of error, say $10^{-6}$.

The set $\pi(m)$ of prime divisors of $m$ is significant as $\rho(n,m)=\rho(n,m')$ and $\phi(m)/m=\phi(m')/m'$ when $\pi(m)=\pi(m')$. Given this fact, we will henceforth assume that $m$ is \emph{square-free}. We implicitly also assume that the primes in $\pi(m)$ are
at most $n$, since $\rho(n,m)=\rho(n,mp)$ for primes $p>n$. With this in mind,
and observing that $\frac{\phi(m)}{m}-1\leq 0$, we now present our main result. 

\begin{theorem}\label{t:theorem}
Let $m$ be a positive square-free integer. There exists a positive constant $C(m)$ such that, for all $n \geq m$,
\[
C(m) \left(\frac{n}{m}\right)^{\frac{\phi(m)}{m}-1} \leq \rho(n,m) \leq \left(\frac{n}{m}\right)^{\frac{\phi(m)}{m}-1}.
\]
\end{theorem}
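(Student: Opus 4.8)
The plan is to pass to the exponential generating function of the sequence $\rho(n,m)$ and reduce everything to elementary coefficient estimates. Since $m$ is square-free, a permutation lies in $R(n,m)$ precisely when none of its cycle lengths is divisible by a prime in $\pi(m)$, that is, when every cycle length lies in $S\coloneqq\{k\ge1:\gcd(k,m)=1\}$. By the exponential formula for permutations with restricted cycle lengths, together with M\"obius inversion over the divisors of $m$,
\[
C(x)\coloneqq\sum_{n\ge0}\rho(n,m)\,x^n=\exp\Big(\sum_{k\in S}\tfrac{x^k}{k}\Big)=\exp\Big(-\sum_{d\mid m}\tfrac{\mu(d)}{d}\log(1-x^d)\Big)=\prod_{d\mid m}(1-x^d)^{-\mu(d)/d}.
\]
Writing $\alpha\coloneqq\phi(m)/m$ and using $\sum_{d\mid m}\mu(d)/d=\prod_{p\in\pi(m)}(1-1/p)=\alpha$ together with $1-x^d=(1-x)(1+x+\cdots+x^{d-1})$, I would factor out the dominant singularity at $x=1$ as $C(x)=(1-x)^{-\alpha}H(x)$, where $H(x)=\prod_{d\mid m}\big((1-x^d)/(1-x)\big)^{-\mu(d)/d}$ is finite and nonzero at $x=1$. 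This exhibits the exponent $\alpha-1$ and confirms that the correct normalisation is $\rho(n,m)\,n^{1-\alpha}$.

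Next I would convert the logarithmic derivative $xC'(x)=T(x)C(x)$, with $T(x)=\sum_{k\in S}x^k$, into the recursion
\[
n\,\rho(n,m)=\sum_{\substack{1\le k\le n\\\gcd(k,m)=1}}\rho(n-k,m),
\]
and prove both inequalities by strong induction on $n$ from this single identity. For the upper bound the inductive hypothesis is exactly $\rho(j,m)\le(j/m)^{\alpha-1}=m^{1-\alpha}j^{\alpha-1}$. The base cases $n\le m$ are immediate, since $\alpha-1\le0$ forces $(n/m)^{\alpha-1}\ge1\ge\rho(n,m)$; this also shows the hypothesis holds for every $j\le m$, so in the inductive step it may be applied to all $1\le n-k<n$. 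Substituting the hypothesis and setting $j=n-k$ (the term $k=n$ contributing $\rho(0,m)=1$ when $\gcd(n,m)=1$), the step reduces to the purely number-theoretic inequality
\[
m^{\alpha-1}\,[\gcd(n,m)=1]+\sum_{\substack{1\le j\le n-1\\\gcd(n-j,m)=1}}j^{\alpha-1}\ \le\ n^{\alpha}.
\]

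Establishing this last inequality with the constant \emph{exactly} $1$ is the crux, and I expect it to be the main obstacle. Heuristically the summands carry total mass $\sum_{j<n}j^{\alpha-1}\approx n^{\alpha}/\alpha$, while the coprimality condition retains a proportion $\alpha$ of them, so the two effects cancel to leave precisely $n^{\alpha}$; the estimate is therefore genuinely tight and cannot be obtained only up to an arbitrary constant. I would control it by Abel (partial) summation against the counting function $N(t)\coloneqq\#\{1\le j\le t:\gcd(n-j,m)=1\}$, which by inclusion--exclusion over the divisors of $m$ satisfies $N(t)=\alpha t+\delta(t)$ with $|\delta(t)|\le 2^{\omega(m)}$. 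The delicate point is that the weight $j^{\alpha-1}$ is largest for $j$ near $0$, that is, for $k$ near $n$, where the discrepancy $\delta$ depends on the residues of $n$ modulo the primes of $m$, so the boundary term must be handled precisely rather than absorbed into an error. In the language of the introduction, this tight upper constant is exactly the assertion $\lambda_m\le1$.

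For the lower bound the theorem asserts only the existence of a positive $C(m)$, which I would obtain in either of two ways. The soft route combines positivity---$\rho(n,m)>0$ for every $n$, since the identity lies in $R(n,m)$---with Pouyanne's asymptotic $\rho(n,m)\,n^{1-\alpha}\to\kappa_m>0$: fix $N_0(m)$ beyond which $\rho(n,m)\,n^{1-\alpha}\ge\tfrac12\kappa_m$, and take $C(m)$ to be the minimum of $\tfrac12\lambda_m$ and the finitely many positive values $\rho(n,m)(n/m)^{1-\alpha}$ for $m\le n<N_0(m)$. For an explicit $C(m)$, as the intended algorithmic application demands, I would instead run the companion induction with hypothesis $\rho(j,m)\ge C(m)(j/m)^{\alpha-1}$, which reduces to the matching lower estimate $\sum_{1\le j\le n-1,\ \gcd(n-j,m)=1}j^{\alpha-1}\ge c\,n^{\alpha}$ supplied by the same Abel-summation analysis, with $C(m)$ pinned down by the base cases $m\le n\le 2m$. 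Either way the two inequalities of Theorem~\ref{t:theorem} follow.
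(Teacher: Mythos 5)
Your setup is right as far as it goes: the generating function, the basic recursion $n\rho(n,m)=\sum_{1\le k\le n,\ \gcd(k,m)=1}\rho(n-k,m)$, and the identification of the exponent $\alpha-1=\frac{\phi(m)}{m}-1$ all match the paper, and your ``soft'' lower bound (positivity of every $\rho(n,m)$ plus Pouyanne's $\rho(n,m)n^{1-\alpha}\to\kappa_m>0$) is a legitimate, if non-explicit, proof of the lower half of the statement. The gap is in the upper bound, and it is exactly the step you flag as ``the crux'': the inequality $\sum_{1\le j\le n-1,\ \gcd(n-j,m)=1}j^{\alpha-1}+m^{\alpha-1}[\gcd(n,m)=1]\le n^{\alpha}$ is not established, and your proposed tool cannot establish it. Abel summation or integral comparison against $N(t)=\alpha t+\delta(t)$ with $|\delta|\le 2^{\omega(m)}$ yields only $n^{\alpha}+O(1)$: for a decreasing weight, the sum over an arithmetic progression starting at $j_0$ exceeds $\frac1m\int_{j_0}^{n}t^{\alpha-1}\,dt$ by a boundary term of size $j_0^{\alpha-1}$, which can be as large as $1$, while the slack available is also $O(1)$ and of a sign depending on $n\bmod m$. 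Worse, your explicit lower-bound induction needs the \emph{reverse} inequality $\sum_j j^{\alpha-1}\ge n^{\alpha}$ with constant exactly $1$ (the constant $C(m)$ cancels from both sides of the inductive step, so a weaker constant $c<1$ degrades geometrically), and the two requirements are mutually contradictory unless the sum equals $n^\alpha$ exactly. So the induction with target $(n/m)^{\alpha-1}$ in both directions cannot close.

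The paper resolves precisely this difficulty by two devices you do not have. First, it inducts on the \emph{differenced} recursion $n\rho(n)=(n-m)\rho(n-m)+\sum_{k\in\Phi}\rho(n-k)$ (obtained by subtracting the basic recursion at $n$ and at $n-m$), so that each inductive step involves only $\rho(n-m)$ and the $\phi(m)$ values $\rho(n-k)$ with $k\le m$ --- no long sums, hence no discrepancy analysis. Second, it replaces the target $(n/m)^{\alpha-1}$ by $\lceil n/m\rceil^{\alpha-1}$ for the upper bound and $C(m)\lfloor n/m\rfloor^{\alpha-1}$ for the lower bound (which imply the stated bounds since the exponent is negative); the rounding creates exactly the slack your version lacks, and the step reduces to an elementary two-sided binomial estimate for $\bigl(\frac{a-1}{a}\bigr)^{\alpha}$ plus a polynomial factorisation. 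To repair your argument you would either need to prove your tight inequality by a genuinely sharper residue-by-residue analysis (not supplied), or adopt the floor/ceiling normalisation and the differenced recursion, at which point you have reconstructed the paper's proof.
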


The exponent $\frac{\phi(m)}{m}-1$ in Theorem~\ref{t:theorem} is negative, and hence $\lceil\frac{n}{m}\rceil^{\frac{\phi(m)}{m}-1} \leq (\frac{n}{m})^{\frac{\phi(m)}{m}-1}$ and $\lfloor\frac{n}{m}\rfloor^{\frac{\phi(m)}{m}-1} \geq (\frac{n}{m})^{\frac{\phi(m)}{m}-1}$, for $n\geq m$.  Thus, in order to prove Theorem~\ref{t:theorem}  it is sufficient to prove that 
\begin{equation}\label{eq:E}
C(m) \left\lfloor\frac{n}{m}\right\rfloor^{\frac{\phi(m)}{m}-1} \leq \rho(n,m) \leq \left\lceil\frac{n}{m}\right\rceil^{\frac{\phi(m)}{m}-1}.
\end{equation}
We prove these inequalities in Section~\ref{s:proof}. In fact the upper bound holds for $n\geq1$. We conclude with a conjecture in Section~\ref{sec:conj} based on computational evidence.

First we make a few remarks concerning the constant $C(m)$ and links between Theorem~\ref{t:theorem} and the results  \eqref{eq:prime} and \eqref{eq:two_primes}.
\begin{remark}\leavevmode
\begin{Myenumerate}
\item We prove Theorem~\ref{t:theorem} with the constant
\begin{equation}
 C(m) \coloneqq \min\{ \rho(n, m) \mid m \leq n \leq 2m-1 \}.  \label{eq:C}
\end{equation}
In particular, if $m$ is a prime then $C(m) = 1-\frac{1}{m}$.

\item If an element of $\Sym(n)$ has order coprime to $m$, then the length of each of its cycles is certainly not divisible by $m$. Hence, we have the upper bound $\rho(n,m)\leq p_{\neg m}(n)=\prod_{i=1}^{\lfloor\frac{n}{m}\rfloor}(1-\frac{1}{im})$ by~\cite{ref:glasby}. However, this bound grows too quickly as remarked on in (c).

\item If $m$ is prime, then the exponent is $\frac{\phi(m)}{m}-1 = \frac{m-1}{m}-1 = -\frac{1}{m}$, and we obtain from  Theorem~\ref{t:theorem} the result \eqref{eq:prime}, apart from determining the constant $k(m)$. In fact, the exponent  $\frac{\phi(m)}{m}-1$ is equal to  $-\frac{1}{m}$ if and only if $m$ is a power of a prime, and in all other cases the exponent is strictly less than   $-\frac{1}{m}$. In other words, if $m$ is divisible by at least two primes then $\rho(n,m)$ grows more slowly, as $n$ increases, than $p_{\neg m}(n)$ does. 

\item Suppose $m=pq$ where $p<q$ are primes. Then $\frac{\phi(m)}{m}-1 = -\frac{1}{p}-\frac{1}{q} + \frac{1}{pq}$ and Theorem~\ref{t:theorem} appears to differ from \eqref{eq:two_primes} by a multiplicative factor of $n^{1/pq}$. However, in our context $m$ is fixed and $n$ increases without bound, whereas Erd\H{o}s and Tur\'an assume for \eqref{eq:two_primes} that both $p$ and $q$ are bounded:
\begin{equation}\label{eq:ET}
  (\log n)^{3/4} \leq p<q \leq \frac{10\log n}{\log\log n}.
\end{equation}
Thus, both $m$ and $n$ are assumed to increase in~\eqref{eq:two_primes}. The apparent inconsistency can be resolved by showing that~\eqref{eq:ET} implies
\[
n^{\tfrac{1}{pq}} = 1 + O((\log n)^{-1/2}).
\]
For an upper bound, from~\eqref{eq:ET} we have
\[
n^{1/(pq)}\leq n^{(\log n)^{-3/2}} = n^{(\log n)^{-1} (\log n)^{-1/2}}
= e^{(\log n)^{-1/2}} = 1 + O((\log n)^{-1/2}).
\]
For a lower bound we show 
\[
  n^{\tfrac{1}{pq}}\geq n^{(\log \log n)^2/(100(\log n)^2)} \geq 1 + O((\log n)^{-1/2}).
\]
Establishing the last inequality is the same as bounding (above) the function
\[
  f(n) \coloneqq (n^{x(\log n)^{-1}} - 1)(\log n)^{1/2}\quad\textup{where}\quad x=\frac{(\log\log n)^2}{100\log n}.
\]
Rewriting $f(n)$ using the identity $n^{(\log n)^{-1}}=e$ gives
\[
  f(n) = (e^x-1)(\log n)^{1/2}.
\]
Since $x\to 0$ as $n\to\infty$, we can choose $n$ large enough so that $x<1/2$. However,
$0\leqslant e^x-1< 2x$ for $0\leqslant x<1/2$ so
\[
  0\leq f(n) < 2x (\log n)^{1/2} = \frac{(\log \log n)^2}{50 (\log n)^{1/2}}.
\]
Hence $f(n)\to 0$ as $n\to\infty$, so $f(n)$ is bounded above as claimed.

\item The proofs by  Erd\H{o}s and Tur\'an of results such as \eqref{eq:prime} and \eqref{eq:two_primes} draw heavily on tools from complex analysis. In \cite[Section~5]{ref:ErdosTuran67}, Erd\H{o}s and Tur\'an state that it would be desirable to obtain a proof of \eqref{eq:two_primes} using more direct means:
\begin{quote}
     \textit{``A more direct (real-variable or algebraic) approach to the determination of this coefficient would be desirable.''}
\end{quote}
The proof of Theorem~\ref{t:theorem} is principally algebraic: we determine and exploit a recursive formula for $\rho$.

\item In a different direction, restricting $m$ to a prime number and determining the proportion $\rho(G,m)$ of elements of an arbitrary finite group $G$ whose order is coprime to $m$ has been the subject of papers by many authors. For example, see \cite{ref:IsaacsKantorSpaltenstein95} when $G$ is a permutation group of degree $n$ and see \cite{ref:BabaiGuestPraegerWilson13, ref:GuestPraeger12, ref:GuralnickLubeck01} when $G$ is a finite simple classical group.

\item The set $\Sym(n)^{(m)}=\{\pi^m\mid \pi\in\Sym(n)\}$ of $m$th powers, and
  its cardinality, have been extensively studied, e.g.~\cite{ref:MP,ref:P}. As
  every permutation of order coprime to~$m$ is an $m$th power, we have
  $R(n,m)\subseteq\Sym(n)^{(m)}$. The containment is proper in general,
  for example
  $(1,3)(2,4)\in\Sym(4)^{(2)}\setminus R(4,2)$. However, if $m$~divides
  the exponent~$e$ of $\Sym(n)$ and $\gcd(m,e/m)=1$, then $R(n,m)=\Sym(n)^{(m)}$.
  Hence, one may guess that $|\Sym(n)^{(m)}|$ and $|R(m,n)|$ have
  the same asymptotic density. This follows from~\cite{ref:MP,ref:P}
  and~\cite{ref:Pouyanne02}.
\end{Myenumerate}
\end{remark}

\section{Proof of Theorem~\ref{t:theorem}} \label{s:proof}
For the remainder of the paper, fix $m$ as a square-free positive integer. Recall that $R(n,m)$ is the set of elements in $\Sym(n)$ of order coprime to $m$. Since $m$ is fixed we will write  $R(n):= R(n,m)$ and similarly (except in some formal statements) we write $\rho(n):=\rho(n,m)$. Additionally, we denote the greatest common divisor of integers $c$ and $d$ by $(c,d)$, and we write   
\[
  \Phi = \Phi(m) \coloneqq \{ 1 \leq i \leq m \mid (i,m)=1 \},
\] 
noting that $\phi \coloneqq \phi(m) = |\Phi|$.

The following lemma generalises \cite[Lemma~2.1]{ref:BLGNPS02}.  For convenience, we adopt the convention that $\rho(0) = 1$.

\begin{lemma}\label{l:recursion}
The following recursive formula holds for integers $n \geq m>0$, 
\[ 
  n\rho(n) = (n-m)\rho(n-m) + \sum_{k \in \Phi}^{} \rho(n-k). 
\]
\end{lemma}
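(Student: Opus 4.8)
The plan is to prove a more primitive \emph{one-step} recursion first, namely
\[
  n\rho(n) = \sum_{\substack{1\le \ell\le n\\ (\ell,m)=1}} \rho(n-\ell)
  \qquad(n\ge 1),
\]
and then to obtain the stated formula by writing down this identity for both $n$ and $n-m$ and subtracting, the sums telescoping against one another. The advantage of the one-step recursion is that it comes directly from the cycle structure of a permutation, whereas the gap of size $m$ in the statement of the lemma only emerges after the reindexing.

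To derive the one-step recursion I would condition on the length of the cycle containing a prescribed point, say the point~$n$. Because $m$ is square-free, a permutation lies in $R(n)$ exactly when every one of its cycle lengths is coprime to $m$; in particular the cycle through $n$ has some length $\ell$ with $(\ell,m)=1$. The number of $\ell$-cycles on $\{1,\dots,n\}$ through $n$ is $\binom{n-1}{\ell-1}(\ell-1)! = (n-1)!/(n-\ell)!$, and once such a cycle is fixed the remaining $n-\ell$ points must carry an element of $R(n-\ell)$, of which there are $(n-\ell)!\,\rho(n-\ell)$. Summing over admissible $\ell$ (with the convention $\rho(0)=1$ handling the full-cycle term $\ell=n$) gives $|R(n)| = (n-1)!\sum_{\ell}\rho(n-\ell)$, and dividing by $n!$ yields the displayed identity.

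For the telescoping step the key observation is that the positive integers coprime to $m$ are precisely the integers $jm+k$ with $j\ge 0$ and $k\in\Phi$. Rewriting the one-step recursion for $n$ and for $n-m$ in this form gives
\[
  n\rho(n) = \sum_{k\in\Phi}\ \sum_{\substack{j\ge 0\\ jm+k\le n}}\rho(n-jm-k)
\]
and
\[
  (n-m)\rho(n-m) = \sum_{k\in\Phi}\ \sum_{\substack{j\ge 0\\ jm+k\le n-m}}\rho\bigl(n-(j+1)m-k\bigr).
\]
Shifting the index $j\mapsto j+1$ in the second display turns the condition $jm+k\le n-m$ into $(j+1)m+k\le n$ and the summand into $\rho(n-(j+1)m-k)$, so that the second sum coincides term-for-term with the part of the first sum having $j\ge 1$. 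Subtracting therefore leaves exactly the $j=0$ layer $\sum_{k\in\Phi}\rho(n-k)$, which is the claim; the hypothesis $n\ge m$ ensures $n-k\ge n-m\ge 0$, so every $\rho(n-k)$ is defined.

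The cycle count is entirely standard, so I do not expect it to be the difficulty. The main thing requiring care is the reindexing: one must verify that after the shift $j\mapsto j+1$ the ranges of summation align correctly and that the difference of the two sums is precisely the $j=0$ contribution. This is bookkeeping rather than a conceptual obstacle, but it is the step where the convention $\rho(0)=1$ and the range hypothesis $n\ge m$ are genuinely used, and where an off-by-one error would most easily creep in.
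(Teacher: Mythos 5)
Your proposal is correct and follows essentially the same route as the paper: both derive the one-step recursion $n\rho(n)=\sum_{(\ell,m)=1,\,\ell\le n}\rho(n-\ell)$ by conditioning on the cycle through a fixed point, and both obtain the lemma by subtracting the instance at $n-m$ after a shift of the summation index (your $j\mapsto j+1$ reindexing is the paper's $k\mapsto k+m$ in different clothing). The only cosmetic difference is that your appeal to square-freeness is unnecessary here, since ``order coprime to $m$'' is equivalent to ``all cycle lengths coprime to $m$'' for arbitrary $m$.
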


\begin{proof}
The permutations $x \in R(n)$ can be enumerated according to the length $k$ of the cycle containing the point $1$. The number of choices for the cycle $(1,i_2,\dots,\kern-1pt i_k)$ of $x$ is $(n-1)(n-2)\cdots(n-k+1)$.  Note that $(k,m)=1$ and that the permutation induced by $x$ on the $n-k$ points outside $\{1,i_2,\dots,i_k\}$ lies in $R(n-k)$. Thus
\[
  |R(n)| = \sum_{\substack{1 \leq k \leq n \\ (k,m)=1}}(n-1)(n-2)\cdots(n-k+1) |R(n-k)|.
\]
Dividing this equation by $(n-1)!$, and noting that $|R(a)|=a!\rho(a)$ for all $a \in \Nat$, we obtain
\[
  n\rho(n) = \sum_{\substack{1\leq k \leq n \\ (k,m)=1}} \rho(n-k).
\]
Replacing $n$ above with $n-m$ and observing that
$(k+m,m)=(k,m)$ yields
\[
  (n-m)\rho(n-m) = \sum_{\substack{1\leq k \leq n-m \\ (k,m)=1}} \rho(n-m-k) = 
  \sum_{\substack{m+1\leq k \leq n \\ (k,m)=1}} \rho(n-k).
\]
Subtracting these two equations gives
\[
  n\rho(n) - (n-m)\rho(n-m) = \sum_{k \in \Phi} \rho(n-k). \qedhere
\]
\end{proof}

We now present a technical lemma which will be of use in the proof of Theorem~\ref{t:theorem}. 

\begin{lemma}\label{l:taylor}
Let $y$ and $a$ be real numbers such that $-1 < y < 0$ and $a \geq 2$.
Then 
\[
0 < 1 - \frac{y+1}{a}\left(1-\frac{y}{a}\right) \leq \left(\frac{a-1}{a}\right)^{y+1} 
< 1- \frac{y+1}{a}.
\]
\end{lemma}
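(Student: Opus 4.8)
The plan is to reduce everything to a one-variable inequality by substituting $s=y+1$ and $u=1/a$, so that $s\in(0,1)$ and $u\in(0,\tfrac12]$. A direct expansion (using $\tfrac{y+1}{a}(1-\tfrac{y}{a})=su(1+(1-s)u)=su+s(1-s)u^2$ and $\tfrac{y+1}{a}=su$) shows the three-part claim is equivalent to
\[
0 < 1 - su - s(1-s)u^2 \le (1-u)^s < 1 - su.
\]
In this form two of the three parts are routine. The strict upper bound $(1-u)^s<1-su$ is the strict Bernoulli inequality for the exponent $s\in(0,1)$: one checks $\tfrac{d}{du}\big[(1-u)^s-(1-su)\big]=s\big(1-(1-u)^{s-1}\big)<0$ with equality at $u=0$. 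The positivity of the left member is a crude estimate, since $su<\tfrac12$ and $s(1-s)u^2\le\tfrac14\cdot\tfrac14$, whence $1-su-s(1-s)u^2>\tfrac{7}{16}$.

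The substance of the lemma is the lower bound $(1-u)^s\ge 1-su-s(1-s)u^2$. First I would write $(1-u)^s=1-s\int_0^u(1-\tau)^{s-1}\,d\tau$, which turns the desired bound into $\int_0^u(1-\tau)^{s-1}\,d\tau\le u+(1-s)u^2$. The key observation is that $\tau\mapsto(1-\tau)^{s-1}$ is convex on $[0,u]$ (its second derivative $(s-1)(s-2)(1-\tau)^{s-3}$ is positive), so it lies below the chord joining its endpoint values $1$ and $(1-u)^{s-1}$. Integrating the chord gives $\int_0^u(1-\tau)^{s-1}\,d\tau\le u+\tfrac12\big((1-u)^{s-1}-1\big)u$, and hence it suffices to prove the single-variable inequality
\[
(1-u)^{s-1}\le 1+2(1-s)u.
\]

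Setting $r=1-s\in(0,1)$, this reads $(1-u)^{-r}\le 1+2ru$ for $u\in(0,\tfrac12]$, and I would establish it by examining $\psi(u)=1+2ru-(1-u)^{-r}$. Here $\psi(0)=0$, while $\psi'(u)=r\big(2-(1-u)^{-r-1}\big)$ is \emph{strictly decreasing} in $u$ with $\psi'(0)=r>0$; thus $\psi$ increases and then possibly decreases on $[0,\tfrac12]$, so its minimum is attained at an endpoint. As $\psi(0)=0$, it remains only to verify $\psi(\tfrac12)\ge 0$, i.e.\ $2^r\le 1+r$, which holds on $[0,1]$ because the convex function $r\mapsto 2^r$ lies below the chord through $(0,1)$ and $(1,2)$. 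I expect this final step to be the main obstacle: naive Taylor/remainder estimates for $(1-u)^s$ degrade as $u\to\tfrac12$ — the Lagrange remainder factor $(1-\xi)^{s-2}$ can approach $2^{2-s}$, which is too large by roughly a factor of two — so the monotonicity argument that forces the worst case to the endpoint $u=\tfrac12$, rather than a term-by-term bound, seems essential.
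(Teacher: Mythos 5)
Your proof is correct, and it takes a genuinely different route from the paper's. The paper works directly with the Newton binomial series $(1+x)^{y+1}=\sum_{i\ge0}\binom{y+1}{i}x^i$ at $x=-1/a$: the upper bound comes from noting every term with $i\ge1$ is negative... actually every term with $i\ge2$ is negative so the series is bounded above by its first two terms, and the lower bound comes from a term-by-term comparison of the tail with the series $\sum_{i\ge2}\frac{(-x_0)^i}{i}=x_0-\log(1+x_0)$ at $x_0=-1/2$, which is where the crucial extra factor in $(0,1)$ on the $x^2$ coefficient is harvested. You instead use the integral representation $(1-u)^s=1-s\int_0^u(1-\tau)^{s-1}\,d\tau$, bound the integral by the trapezoid rule (convexity of the integrand), and reduce the remaining one-variable inequality $(1-u)^{-r}\le 1+2ru$ to the endpoint check $2^r\le 1+r$ via a monotonicity argument. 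I checked each step: the substitution $s=y+1$, $u=1/a$ correctly converts the claim to $0<1-su-s(1-s)u^2\le(1-u)^s<1-su$; the positivity and strict Bernoulli bounds are routine as you say; the chord integral $\tfrac{u}{2}\bigl(1+(1-u)^{s-1}\bigr)$ is the right overestimate; $\psi'(u)=r\bigl(2-(1-u)^{-r-1}\bigr)$ is indeed strictly decreasing with $\psi'(0)=r>0$, so the minimum of $\psi$ on $[0,\tfrac12]$ is at an endpoint; and $2^r\le1+r$ on $[0,1]$ follows from convexity. Your closing remark is apt: the two-term Taylor expansion with Lagrange remainder is too weak by roughly a factor of two near $u=\tfrac12$, and the paper escapes this not by a remainder estimate but by summing the entire binomial tail exactly against a logarithm; your convexity-plus-endpoint argument is an equally elementary but structurally different way around the same obstacle, arguably more self-contained since it needs no series manipulation.
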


\begin{proof}
Let $x=-1/a$ and $x_0=-1/2$, and note that $x_0 \leq x < 0$. We seek upper and lower bounds for $f(x) := (1+x)^{y+1} = (\frac{a-1}{a})^{y+1}$. As $|x|<1$, the binomial series below converges absolutely
\[
f(x) = \sum_{i \geq 0} \binom{y+1}{i} x^i.
\]
Since $-1<y < 0$, for each $i>0$, the binomial coefficient 
\[
\binom{y+1}{i} = \frac{(y+1)y(y-1)\cdots(y-(i-2))}{i!}
\] 
has $i-1$ negative factors. Hence, the product $\binom{y+1}{i}x^i$ is 
negative for each $i>0$. Therefore, 
\[
f(x) = \sum_{i \geq 0} \binom{y+1}{i} x^i < 1 + (y+1)x
= 1 - \frac{y+1}{a}
\]
yielding the desired upper bound.

Now we consider the lower bound. Temporarily we assume that $i \geq 2$. Since $(y-1)\cdots(y-(i-2))$ has $i-2$ negative factors, the product $(y-1)\cdots(y-(i-2))x^{i-2}$ is positive for each $i\geq 2$. Hence,
\[
 0 < \prod_{j=1}^{i-2} (y-j)x = \prod_{j=1}^{i-2} (j-y)(-x) \leq \prod_{j=1}^{i-2} (j+1)(-x_0) =(i-1)!(-x_0)^{i-2}.
\]
This in turn shows that
\[
 0 > \binom{y+1}{i}x^i = \frac{(y+1)y(y-1)\cdots(y-(i-2))x^i}{i!} \geq \frac{(y+1)y(-x_0)^{i-2}x^2}{i}.
\]
Taking the terms with $0 \leq i < 2$, together with the above lower bound for the sum of the terms with $i \geq 2$, gives
\begin{align*}
 f(x)&\geq1+(y+1)x+\sum_{i\geq2}\frac{(y+1)y(-x_0)^{i-2}x^2}{i}\\ &=1+(y+1)x+\frac{(y+1)y}{x_0^2}\left(\sum_{i\geq2}\frac{(-x_0)^{i}}{i}\right)x^2.
\end{align*}
Now $\sum_{i \geq 1} \frac{(-x_0)^{i}}{i} = -\log(1+x_0)$, and hence, 
since $x_0 = -1/2$, we have 
\[ 
 x_0^{-2}\sum_{i\geq2}\frac{(-x_0)^{i}}{i}=x_0^{-2}(x_0-\log(1+x_0)),
\]
and this lies in the open interval $(0,1)$. Then since $(y+1)yx^2 < 0$, we obtain the desired lower bound
\[
 f(x)=(1+x)^{y+1}>1+(y+1)x+(y+1)yx^2 = 1 - \frac{y+1}{a}\left(1-\frac{y}{a}\right).
\]
Finally, since $-1<y<0$ and $a\geq 2$, this lower bound is positive.
\end{proof}

We now prove our main result.
\begin{proof}[Proof of Theorem~\ref{t:theorem}]
The result is true when $m=1$ and $C(1)=1$.
Suppose $n\geq m\geq2$. Recall the notation $\Phi = \Phi(m)$ and $\phi = |\Phi|$, and write
\[
 y \coloneqq \frac{\phi}{m}-1.
\]
Observe that $-1 < y < 0$. In addition, for $0 \leq i \leq m-1$, write
\begin{equation}
  x_i = |\{ k \in \Phi \mid k < m-i \}| \label{eq:x} \quad\textup{and}\quad
  y_i = |\{ k \in \Phi \mid k \leq i \}|.
\end{equation}
Note that $x_i\leq m-i-1$, $y_i\leq i$, $x_i + i \geq \phi(m)$ and $y_i +(m-i)\geq \phi(m)$. In summary
\begin{equation}\label{eq:bounds}
\phi(m)-i\leq x_i\leq m-i-1\quad\textup{and}\quad
\phi(m)-m+i\leq y_i\leq i.
\end{equation}

We begin by proving the required upper bound, namely
\begin{equation}
  \rho(n) \leq \left\lceil \frac{n}{m} \right\rceil^y\qquad
  \textup{for $n\geq m\geq2$}. \label{eq:upper}
\end{equation}
Although we do not require it for this proof, the upper bound
above holds trivially if $1 \leq n \leq m$ as then $\rho(n) \leq 1 = \left\lceil\frac{n}{m}\right\rceil^y=1$.
We proceed by induction on $n$. 
Now let $n \geq m+1$, so that $a \coloneqq \left\lceil\frac{n}{m}\right\rceil \geq 2$. 
Write $n=am-b$, and note that $0 \leq b \leq m-1$. Assume the upper bound in \eqref{eq:upper} holds for all positive integers strictly less than $n$. By Lemma~\ref{l:recursion},   
\[
  \rho(am-b) = \frac{(a-1)m-b}{am-b} \rho((a-1)m-b) + \frac{1}{am-b} \sum_{k \in \Phi}^{} \rho(am-b-k).
\]
By the inductive hypothesis, $\rho((a-1)m-b) \leq (a-1)^y$. Similarly, for each $k \in \Phi$, if $k < m-b$ then $am-b-k > (a-1)m$ so by induction $\rho(am-b-k) \leq a^y$, and if $k \geq m-b$ then $\rho(am-b-k) \leq (a-1)^y$. Therefore, using the definition of $x_i$ in \eqref{eq:x}, we obtain
\begin{align*}  
  \rho(am-b) &\leq \frac{(a-1)m-b}{am-b}(a-1)^y + \frac{x_ba^y + (\phi-x_b)(a-1)^y}{am-b} \\
             &=     a^y \left( \left( \frac{a-1}{a} - \frac{b/a}{am-b} \right)\left(\frac{a-1}{a}\right)^y + \frac{x_b + (\phi-x_b)\left(\frac{a-1}{a}\right)^y}{am-b}\right) \\
             &=     a^y \left( \left( \frac{a-1}{a} \right)^{y+1} \left( 1 - \frac{b - a\phi + ax_b}{(a-1)(am-b)}\right) + \frac{x_b}{am-b}\right).
\end{align*}
By Lemma~\ref{l:taylor}, $( \frac{a-1}{a})^{y+1} < 1 - \frac{y+1}{a}$, and as $y+1=\frac{\phi}{m}$ and $a \geq 2$, we have
\[
  \rho(am-b) \leq  a^y Y\textup{ where } Y= \left(1-\frac{\phi}{am}\right) \left( 1 - \frac{b - a\phi + ax_b}{(a-1)(am-b)}\right) + \frac{x_b}{am-b}.
\]
We want to show that $Y\leq 1$, so we write $Y=1-Y_0$ where $Y_0$ is an algebraic fraction in $a,b,x_b,m,\phi$. It suffices, therefore, to show that $Y_0\geq0$ for all input values satisfying $a\geq2$, $0\leq b<m$, and $\phi\leq\min\{b+x_b,m\}$ \emph{c.f.}~\eqref{eq:bounds}. We use a computer to factor $Y_0$ giving
 \[
 Y_0= 1 - Y   =  \frac{(m-\phi)(b+x_b-\phi)}{m(a-1)(am-b)} \geq 0.
\]
Thus $Y\leq 1$ and hence $\rho(am-b) \leq a^y$, proving the upper bound~\eqref{eq:upper} for all $n \geq 1$.

We now turn to the lower bound. Recall the definition of $C := C(m)$ in \eqref{eq:C}, and  note that $C > 0$ since $\rho(n) > 0$ for all $n \geq 1$. We will prove that, 
\begin{equation}
  \rho(n) \geq C \left\lfloor \frac{n}{m} \right\rfloor^y
  \qquad\textup{for $n\geq m\geq2$.}\label{eq:lower}
\end{equation}
As for the proof of the upper bound, we use induction on $n$.  Observe that if $m \leq n \leq 2m-1$, then $\left\lfloor\frac{n}{m}\right\rfloor = 1$, and hence  
$\rho(n) \geq C = C \left\lfloor\frac{n}{m}\right\rfloor^y$
holds by \eqref{eq:C}. Now suppose $n \geq 2m$. Then $a \coloneqq \left\lfloor\frac{n}{m}\right\rfloor \geq 2$. Write $n=am+b$, and note that $0 \leq b \leq m-1$. (Be aware that the definitions of $a$ and $b$ differ from their definitions in the proof of the upper bound.) Assume that the lower bound~\eqref{eq:lower} holds for all positive integers strictly less than~$n$. By Lemma~\ref{l:recursion},
\[
  \rho(am+b) = \frac{(a-1)m+b}{am+b} \rho((a-1)m+b) + \frac{1}{am+b} \sum_{k \in \Phi}^{} \rho(am+b-k).
\]
By the inductive hypothesis, $\rho((a-1)m+b) \geq C(a-1)^y$. Similarly, for each $k \in \Phi$, if $k \leq b$ then $am+b-k\geq am$ so by induction, $\rho(am+b-k) \geq Ca^y$, and if $k > b$ then $am > am+b-k\geq  (a-1)m$ so by induction $\rho(am+b-k) \geq C(a-1)^y$.  Therefore, using the definition of $y_b$ in \eqref{eq:x}, we obtain
\begin{align*}
  \rho(am+b) &\geq  C \left( \frac{(a-1)m+b}{am+b} (a-1)^y + \frac{y_ba^y + (\phi-y_b)(a-1)^y}{am+b} \right) \\
             &=     Ca^y \left( \left( \frac{a-1}{a} + \frac{b/a}{am+b} \right)\left(\frac{a-1}{a}\right)^y + \frac{y_b + (\phi-y_b)\left(\frac{a-1}{a}\right)^y}{am+b}\right) \\
             &=     Ca^y \left( \left( \frac{a-1}{a} \right)^{y+1} \left( 1 + \frac{b + a\phi - ay_b}{(a-1)(am+b)}\right) + \frac{y_b}{am+b}\right).
\end{align*}
By Lemma~\ref{l:taylor}, since $a \geq 2$, $y=\frac{\phi}{m}-1$ and $-1 < y < 0$, we have $\left(\frac{a-1}{a}\right)^{y+1} > 1-\frac{y+1}{a}\left( 1-\frac{y}{a}\right)$, so
\[
  \rho(am+b) \geq  Ca^y \left( \left(1-\frac{\phi}{am} \left( 1 + \frac{m-\phi}{am} \right) \right)  \left( 1 + \frac{b + a\phi - ay_b}{(a-1)(am+b)}\right) + \frac{y_b}{am+b}\right).
\]
Write the above expression as $Ca^yY$ where $Y$ is an algebraic fraction in $a,b,y_b,m,\phi$. We want to show that $Y\geq 1$, so we write $Y=1+Y_0$. It suffices, therefore, to show that $Y_0\geq0$ for all input values satisfying $a\geq2$, $0\leq b<m$, $m\geq\phi$ and $\phi -m+b\leq y_b\leq b$ (see \eqref{eq:bounds}). We use a computer to factor $Y_0$ giving
\[
  Y_0=Y-1=
  \frac{(m-\phi)(am(b-y_b)+\phi(y_b-b+m-\phi))}{m^2 a (a-1)(am+b)}
  \geq0.
\]
Therefore, $\rho(am+b) \geq Ca^yY\geq Ca^y$ and the claim in \eqref{eq:lower} holds for all $n \geq m$. This establishes the lower bound and completes the proof of the theorem.
\end{proof}

\section{Computational evidence}\label{sec:conj}
Let $n \geq m > 1$ and assume that $m$ is square-free. First suppose that $m$ is prime. Recall that $p_{\neg m}(n)$ is the proportion of elements in $\Sym(n)$ with no cycle of length divisible by $m$, so $p_{\neg m}(n)=p_{\neg m}(n+i)$ for $0\leq i<m$. Since $\rho(n,m)=p_{\neg m}(n)$, it follows that for all $a \geq 1$,
\begin{equation}\label{eq:equal_prime}
\rho(am,m) = \rho(am+1,m) = \cdots = \rho(am+(m-1),m).
\end{equation}
Moreover, in this case (since $m$ is prime),
\begin{equation}\label{eq:constant_prime_rho}
\rho(n,m) = k(m) \left(\frac{n}{m}\right)^{\frac{\phi(m)}{m}-1} + O(n^{\frac{\phi(m)}{m}-2}),
\end{equation}
where $k(m)=\Gamma(1-\frac{1}{m})^{-1}$, noting that $\pi^{-1/2} \leq k(m) < 1$ (see \cite[Sections~3 and~4]{ref:ErdosTuran67} and \cite[Theorem~2.3]{ref:BLGNPS02}).

In this final section we investigate the extent to which an analogue of the relationship in \eqref{eq:constant_prime_rho} holds for general positive integers $m$. We do this by presenting some computational evidence which led the authors to the statement of Theorem~\ref{t:theorem} and to Question~\ref{con:conjecture} below.

The recursive formula for $\rho$ in Lemma~\ref{l:recursion} provides an efficient means of computing $\rho(n,m)$ from the values $\rho(0,m)$, $\rho(1,m)$, \dots, $\rho(m-1,m)$. In Figures~\ref{f:plot6}--\ref{f:plot30} we fix the value of $m$ as $6$, $15$ and $30$, respectively, and we plot 
\[
  f(n,m) \coloneqq \rho(n,m) \cdot \left(\frac{n}{m}\right)^{1-\frac{\phi(m)}{m}}
\] 
against $n$ for many values of $n$ greater than $m$.

\begin{figure}[H]
\begin{tikzpicture}
\begin{axis}[width=0.52\textwidth,xlabel=$n$,ylabel=$\rho(n)(n/6)^{1-\phi(6)/6}$, 
    xmin = 0, xmax=2000, xtick distance=500, ymin=0.05, ytick distance = 0.05,
	yticklabel style={
            /pgf/number format/fixed,
            /pgf/number format/precision=2,
            /pgf/number format/fixed zerofill
        }]
  \pgfplotstableread[col sep=comma]{d6_2000}\datasix;
  \addplot [color=blue, 
           only marks, 
           mark=*, 
           mark size=0.5pt] table [x=X, y=Y]{\datasix};
\end{axis}
\end{tikzpicture}
\caption{Plot of $f(n,6)$ versus $n$ for $7 \leq n \leq 2000$.} \label{f:plot6}
\end{figure}
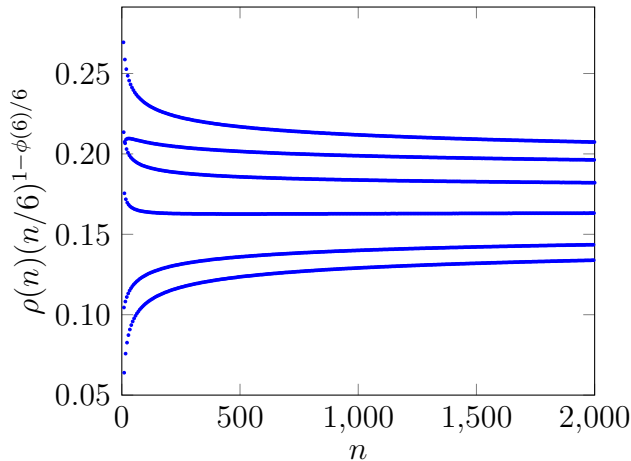

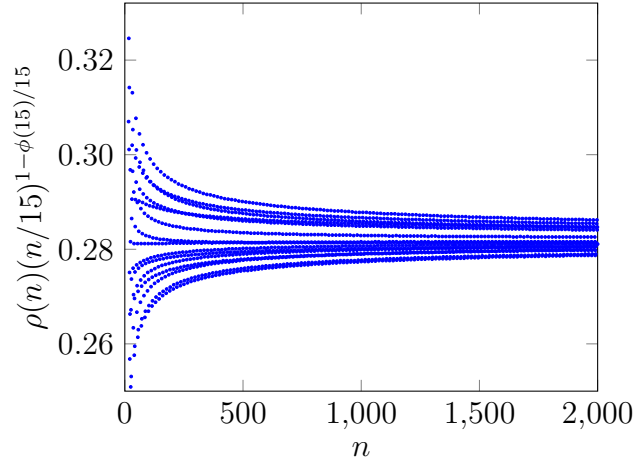
\begin{figure}[H]
\begin{tikzpicture}
\begin{axis}[width=0.52\textwidth,xlabel=$n$,ylabel=$\rho(n)(n/15)^{1-\phi(15)/15}$, xmin = 0, xmax=2000, xtick distance=500, ymin=0.25, ytick distance = 0.02,
	yticklabel style={
            /pgf/number format/fixed,
            /pgf/number format/precision=2,
            /pgf/number format/fixed zerofill
        }]
  \pgfplotstableread[col sep=comma]{d15}\datafifteen;
  \addplot [color=blue, 
           only marks, 
           mark=*, 
           mark size=0.5pt] table [x=X, y=Y]{\datafifteen};
\end{axis}
\end{tikzpicture}
\caption{Plot of $f(n,15)$ versus $n$ for $16 \leq n \leq 2000$.} \label{f:plot15}
\end{figure}

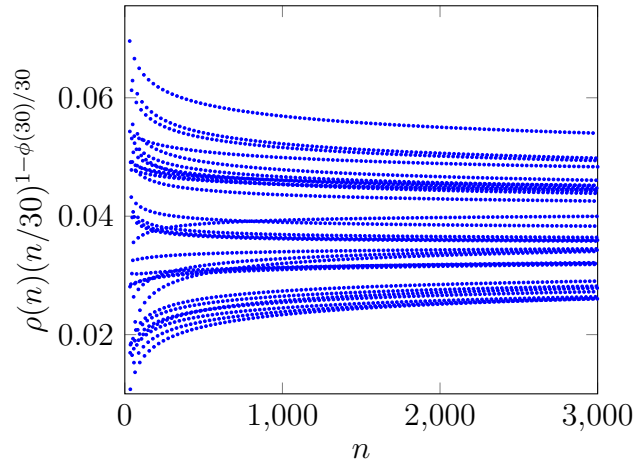
\begin{figure}[H]
\begin{tikzpicture}
\begin{axis}[width=0.52\textwidth,xlabel=$n$,ylabel=$\rho(n)(n/30)^{1-\phi(30)/30}$, xmin = 0, xmax=3000, xtick distance=1000, ymin=0.01, ytick distance = 0.02,
	yticklabel style={
            /pgf/number format/fixed,
            /pgf/number format/precision=2,
            /pgf/number format/fixed zerofill
        }]
  \pgfplotstableread[col sep=comma]{d30}\datafifteen;
  \addplot [color=blue, 
           only marks, 
           mark=*, 
           mark size=0.5pt] table [x=X, y=Y]{\datafifteen};
\end{axis}
\end{tikzpicture}
\caption{Plot of $f(n,30)$ versus $n$ for $31 \leq n \leq 3000$.} \label{f:plot30}
\end{figure}

It is evident from Figures~\ref{f:plot6}--\ref{f:plot30} that~\eqref{eq:equal_prime} does not hold if $m$ is composite.  Figures~\ref{f:plot6}--\ref{f:plot30} suggest that  for fixed $0 \leq b < m$ the function $f(n,m)$ is either increasing or decreasing as $n\to\infty$ with $n\equiv b\pmod m$, and moreover that the limit is independent of $b$.
This would imply~\cite[Proposition, p.\,7]{ref:Pouyanne02} and give even sharper
bounds than in our main theorem as we explain below.
Pouyanne~\cite[Proposition, p.\,7]{ref:Pouyanne02} defined
a constant $\kappa_m$ (for not necessarily square-free $m$) as follows:
\begin{equation}\label{E:kappam}
  \kappa_m=\frac{1}{\Gamma\left(\frac{\phi(m)}{m}\right)}\prod_{d\mid m}d^{-\frac{\mu(d)}{d}}\quad\textup{where}\quad
  \mu(d)=\begin{cases}(-1)^{|\pi(d)|}&\textup{if $d$ is square-free,}\\
  0&\textup{otherwise.}
  \end{cases}
\end{equation}
Thus $f(n,m)\sim\lambda_m:=\kappa_m/m^{1-\phi(m)/m}$ as $n\to\infty$
paraphrases Pouyanne's result.
Theorem~\ref{t:theorem} proves that $C(m)\leq\lambda_m\leq1$.
Figures~1--3 show that the convergence as $n\to\infty$
of $f(n,m)$ to $\lambda_m$ can be very slow. Computational evidence
suggests that the sequence $\left( f(am+b,m)\right)_{a=0}^\infty$ is eventually
monotonic. This leads us to the following question.

\begin{question}\label{con:conjecture}
  Let $m$ be a positive square-free integer. Does there exists an
  integer~$a_0$ such that for each $b$ the sequence $\left( f(am+b,m)\right)_{a\geq a_0}$
  is monotonic? 
\end{question}

\begin{remark}
  If this is true, then for $a\geq a_0$, $f(am+b,m)$ is bounded
  between $f(a_0m+b,m)$ and $\lambda_m=\kappa_m/m^{1-\phi(m)/m}$. When
  $m=p$ is prime and $0\leq b\le\frac{p-1}2$, Theorem~\ref{T:monotone} below
  shows $\lambda_p\leq f(ap+b,p)\leq 1-\frac1p$ and for all $a\geq1$. This improves~\eqref{eq:prime}.
\end{remark}

\begin{remark}
We used the optimised \textsc{Magma}~\cite{ref:BCP97}
code in~\cite{ref:ScottStephen}, and the recurrence in
Lemma~\ref{l:recursion}, to compute values of $\rho(n,m)$ for $n$ up to $10^5$
and $m\leq30$.
This allowed us to both test the veracity of Question~\ref{con:conjecture},
and to discover some surprising patterns. The six curves in
Figure~\ref{f:plot6} (unsurprisingly) correspond to the six possible choices for
$b=n\mod m$, but in a strange order \emph{viz.} $b=1,6,2,5,3,4$ going from
the highest curve to the lowest. (Incidentally, this observation
motivated our ``modulo~$m$'' proof of Theorem~1.) We noticed also that for many choices of
$m$ and~$b$ the sequence $f(am+b,m)$ for $0\leq a\leq 1000$ 
was strictly decreasing, or strictly increasing. However, for very few choices
e.g. $(m,b)=(26,24)$, the sequence initially increased (6 times), and
then increased (596 times) and then increased (397 times). (The graph
is a very flat sawtooth and so looks horizontal.)
These unusual patterns lead us to question the existence of
a simple proof of Question~\ref{con:conjecture}.
\end{remark}

Question~\ref{con:conjecture} is true in the very special
case when $p$ is a prime.

\begin{theorem}\label{T:monotone}
Let $p$ be a prime. The sequence $\left(f(ap+b,p)\right)_{a\geq0}$
increases strictly for $0\leq b\leq\lfloor\frac{p-1}{2}\rfloor$, and for $a\geq\frac{p-1}{2}$,
decreases strictly for $\lfloor\frac{p-1}{2}\rfloor<b\leq p-1$.
\end{theorem}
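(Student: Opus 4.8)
The plan is to reduce the asserted monotonicity to a single comparison between consecutive terms of each subsequence, and then to settle that comparison by truncating a binomial series. First I would exploit the prime structure. Since $p$ is prime, \eqref{eq:equal_prime} gives $\rho(ap+b,p)=\rho(ap,p)=:r_a$ for all $0\le b\le p-1$, and the product formula $\rho(n,p)=\prod_{i=1}^{\lfloor n/p\rfloor}(1-\tfrac1{ip})$ (from \cite{ref:glasby}, or Lemma~\ref{l:recursion} at $n=ap$) yields $r_{a+1}/r_a=1-\tfrac1{(a+1)p}$. As $1-\tfrac{\phi(p)}p=\tfrac1p$, we have $f(ap+b,p)=r_a\,\bigl(a+\tfrac bp\bigr)^{1/p}$, so the behaviour of $\bigl(f(ap+b,p)\bigr)_a$ for fixed $b$ is controlled by the consecutive ratio
\[
  g(a,b):=\frac{f((a+1)p+b,p)}{f(ap+b,p)}=\frac{(a+1)p-1}{(a+1)p}\left(\frac{(a+1)p+b}{ap+b}\right)^{1/p}.
\]
Writing $t=(a+1)p$ and noting $ap+b=t+b-p$, a short manipulation gives the equivalence
\[
  g(a,b)>1 \iff 1-\Bigl(1-\tfrac1t\Bigr)^{p}<\frac{p}{t+b},
\]
with the reverse inequality characterising $g(a,b)<1$. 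This is the form I would work with throughout.

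The decisive observation is that $b\mapsto\tfrac{p}{t+b}$ is strictly decreasing while the left-hand side is independent of $b$; equivalently, for each fixed $a$ the ratio $g(a,b)$ is strictly decreasing in $b$. It therefore suffices to verify two boundary inequalities: that $g\bigl(a,\lfloor\tfrac{p-1}2\rfloor\bigr)>1$ for every $a\ge0$, giving strict increase for all $0\le b\le\lfloor\tfrac{p-1}2\rfloor$; and that $g\bigl(a,\lfloor\tfrac{p-1}2\rfloor+1\bigr)<1$ for every $a\ge\tfrac{p-1}2$, giving strict decrease for all $\lfloor\tfrac{p-1}2\rfloor<b\le p-1$.

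To establish these I would expand $1-(1-\tfrac1t)^{p}=\sum_{k\ge1}(-1)^{k-1}\binom pk t^{-k}$, whose terms strictly decrease in absolute value once $t\ge p$, so the Leibniz bracketing sandwiches the sum between consecutive partial sums. For the increasing case, bounding the sum above by its first three terms and clearing denominators reduces $g(a,\lfloor\tfrac{p-1}2\rfloor)>1$ to a polynomial inequality in $x=1/t$; the identity $p\cdot\tfrac{p-1}2=\binom p2$ cancels its dominant term, and the remainder collapses to $x<\tfrac{p+1}{(p-1)(p-2)}$, which holds for all $x\le\tfrac1p$, hence for all $a\ge0$. For the decreasing case, bounding the sum below by its first two terms reduces $g(a,\lfloor\tfrac{p-1}2\rfloor+1)<1$ to $x<\tfrac{4}{p^2-1}$; here the hypothesis $a\ge\tfrac{p-1}2$ gives $t\ge\tfrac{p(p+1)}2$ and hence $x\le\tfrac{2}{p(p+1)}<\tfrac{4}{p^2-1}$, as needed. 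The even prime $p=2$, where the binomial sum terminates and the floor behaves differently, I would handle by direct computation.

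The main obstacle is the asymptotic sharpness of these two boundary inequalities. Because $f(ap+b,p)\to\lambda_p$ and the value of $b$ separating increase from decrease tends to $\tfrac{p-1}2$ as $a\to\infty$, the inequalities become tight in the limit, so crude bounds do not suffice and the binomial sum must be truncated at exactly the order that, after the dominant contributions cancel or combine, leaves a sign-definite remainder on the relevant range of $x$. A second subtlety is that the decreasing behaviour genuinely fails for small $a$ when $p$ is large — for example, for $p=13$ and $b=7$ the subsequence increases from $a=0$ to $a=1$ — so a lower cutoff on $a$ is unavoidable; one must confirm that $a\ge\tfrac{p-1}2$ lies safely within the range $x<\tfrac4{p^2-1}$ on which the reduced inequality for the decreasing case is valid.
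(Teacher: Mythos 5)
Your proposal is correct, and its skeleton is the same as the paper's: both pass to the consecutive ratio $f((a+1)p+b,p)/f(ap+b,p)$ via the product formula for $\rho(n,p)$, both exploit monotonicity in $b$ (the paper's \eqref{E:montone}) to reduce to the two boundary values $b=\lfloor\frac{p-1}{2}\rfloor$ and $b=\lfloor\frac{p-1}{2}\rfloor+1$, and both end up having to prove a sharp inequality comparing $\left(\frac{x-1}{x}\right)^{p}$, with $x=(a+1)p$, to a rational function of $x$ and $p$. The difference lies in how that inequality is settled. The paper runs an induction on $p$ with $x\ge 2$ a free real parameter; the inductive step reduces to $p^2+p>0$ in the increasing case and to $p^2+p\le 2x$ in the decreasing case, which is exactly where the hypothesis $a\ge\frac{p-1}{2}$ enters. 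You instead truncate the binomial expansion of $(1-\tfrac1t)^{p}$ after three (resp.\ two) terms using the Leibniz bracketing --- legitimate since $t\ge p$ makes the terms decrease in modulus --- and reduce to $x\le\frac{p+1}{(p-1)(p-2)}$ (which holds for all $x\le\frac1p$, after the linear terms cancel via $p\cdot\frac{p-1}{2}=\binom{p}{2}$) and $x\le\frac{4}{p^2-1}$ (which follows from $t\ge\frac{p(p+1)}{2}$). I verified both reductions, including the strictness bookkeeping at $p=3$, where the three-term truncation is exact but the final inequality is still strict. The paper's induction is marginally shorter to write down; your truncation makes the provenance of the threshold $a\ge\frac{p-1}{2}$ more transparent, and your remark that the decreasing claim genuinely fails for small $a$ (e.g.\ $p=13$, $b=7$, $a=0$) correctly explains why the cutoff cannot be dispensed with.
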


\begin{proof}
Write $n=ap+b$ where $0\leq b<p$. It follows from the closed
formula 
$\rho(n,p)=\prod_{i=1}^a(1-\frac{1}{ip})$ of~\cite[Lemma~I]{ref:ErdosTuran67}, that
$\rho(n+p,p)=\rho(n,p)(1-\frac{1}{(a+1)p})$. Hence
\[
  \frac{f(n+p,p)}{f(n,p)}=\left(1-\frac{1}{(a+1)p}\right)\frac{\left(\frac{n+p}{p}\right)^{1-\frac{\phi(p)}{p}}}{\left(\frac{n}{p}\right)^{1-\frac{\phi(p)}{p}}}
  =\left(1-\frac{1}{(a+1)p}\right)\left(1+\frac{1}{a+\frac{b}{p}}\right)^{\frac{1}{p}}.
\]

Fix $p$ and $b$. Our proof has two cases. Case 1 proves that
the above ratio is at least 1 for $0\leq b\leq\lfloor\frac{p-1}{2}\rfloor$, and 
Case 2 shows the ratio is at most~1 for $\lfloor\frac{p-1}{2}\rfloor<b<p$.

{\sc Case 1.} $0\leq b\leq\lfloor\frac{p-1}{2}\rfloor$. The above ratio is at
least 1 if and only if
\begin{equation}\label{E:goal1}
  \left(1-\frac{1}{(a+1)p}\right)^p\geq \frac{a+c}{a+c+1}
    \qquad\qquad\textup{where $c=\frac bp$.}
\end{equation}
Observe that $0\leq c<1$, and for $0\leq c_1,c_2<1$ we have
\begin{equation}\label{E:montone}
  \frac{a+c_1}{a+c_1+1}<\frac{a+c_2}{a+c_2+1}
\qquad\textup{if and only if $c_1<c_2$.}
\end{equation}
The left-hand side of~\eqref{E:goal1} is independent of $c$, and
by~\eqref{E:montone} the right-hand side of~\eqref{E:goal1} is largest when
$c$ equals $c_0:=\frac12(1-\frac1p)$. Set $x=(a+1)p$. Then
\[
  \frac{a+c_0}{a+c_0+1}=\frac{a+\frac12(1-\frac1p)}{a+\frac12(1-\frac1p)+1}
  =\frac{2ap+p-1}{2ap+3p-1}=\frac{2x-p-1}{2x+p-1}.
\]
Hence~\eqref{E:goal1} is true if for all $a\geq0$ and all integers $p\geq1$,
we have
\begin{equation}\label{E:inc}
  \left(\frac{x-1}{x}\right)^p\geq\frac{2x-p-1}{2x+p-1}
    \qquad\textup{for all real numbers $x\geq2$.}
\end{equation}

We now prove~\eqref{E:inc} by induction on $p$ for
all integers $p\geq1$. The case $p=1$
is clearly true. In the following display,
the first inequality follows from the inductive hypothesis, and the second
requires proof:
\[
\left(\frac{x-1}{x}\right)^{p+1}=\left(\frac{x-1}{x}\right)^p\left(\frac{x-1}{x}\right)\stackrel{\rm IH}{\geq}
\left(\frac{2x-p-1}{2x+p-1}\right)\left(\frac{x-1}{x}\right)
\stackrel{?}{\geq}\frac{2x-p-2}{2x+p}.
\]
The second inequality is equivalent to
\[
(2x+p)(2x-p-1)(x-1)\geq (2x-p-2)(2x+p-1)x.
\]
The left minus the right side is $p^2+p>0$.
This proves Case~1.

{\sc Case 2.} $\lfloor\frac{p-1}{2}\rfloor<b<p$.
In this case it suffices to prove
\begin{equation}\label{E:goal2}
  \left(1-\frac{1}{(a+1)p}\right)^p<\frac{a+c}{a+c+1}
    \qquad\qquad\textup{where $c=\frac bp$.}
\end{equation}
First note that $\lfloor\frac{p-1}{2}\rfloor+1=\lceil\frac{p}{2}\rceil$.
Hence $\frac{b}{2}\leq\lceil\frac{p}{2}\rceil\leq b$ and so $\frac12\leq c$.
For $c\geq\frac12$, the right-hand side of~\eqref{E:goal2} is
smallest for $c=\frac12$ by~\eqref{E:montone}.
As before, set $x=(a+1)p$. We prove~\eqref{E:goal2} by establishing the
inequality below:
\begin{equation}\label{E:G}
  \left(\frac{x-1}{x}\right)^p<\frac{a+\frac12}{a+\frac32}
  =\frac{2a+1}{2a+3}=\frac{2x-p}{2x+p}.
\end{equation}
Reasoning as in Case~1, we prove~\eqref{E:G} for $p\geq1$ by
induction on $p$. Certainly~\eqref{E:G} is true for $p=1$. Assume
it is true for some $p\geq1$. By the inductive hypothesis:
\[
  \left(\frac{x-1}{x}\right)^{p+1}=\left(\frac{x-1}{x}\right)^p\left(\frac{x-1}{x}\right)\stackrel{\rm IH}{<}\left(\frac{2x-p}{2x+p}\right)\left(\frac{x-1}{x}\right)
  \stackrel{?}{\leq}\frac{2x-p-1}{2x+p+1},
  \]
  where the last inequality is equivalent to
\[
  (2x-p)(x-1)(2x+p+1)\leq(2x-p-1)x(2x+p)\quad\textup{or}\quad p^2+p\leq 2x.
\]
Finally, $2x\geq p^2+p$ is true for $a\geq\frac{p-1}{2}$.
This proves Case~2, and the theorem.
\end{proof}


\end{document}